    \newtheorem{theorem}{Theorem}
    \newtheorem{conjecture}[theorem]{Conjecture}
    \newtheorem{proposition}[theorem]{Proposition}
\theoremstyle{definition} % For roman text in the body
    \newtheorem{remark}[theorem]{Remark}
\newcommand{\R}{{\mathbb R}}
\newcommand{\N}{{\mathbb N}}
\newcommand{\lstar}{{\raise-0.15ex\hbox{$\scriptstyle \ast$}}}
\theoremstyle{remark} % For an italic header, more subtle than definition style
\newcommand{\re}{\text{Re}}
\newcommand{\Exp}{\mathbb{E}}
\newcommand{\prob}{\mathbb{P}}
\renewcommand{\Pr}{\prob}
\DeclareDocumentCommand \one { o }
{%
\IfNoValueTF {#1}
{\mathbf{1}  }
{\mathbf{1}\left\{ {#1} \right\} }%
}
\newcommand{\lawequals}{\overset{\mathscr{L}}{=}}
\DeclareDocumentCommand{\Prto} {o} {
\IfNoValueTF {#1}
 {\overset{\Pr}{\longrightarrow}}
 { \xrightarrow[ #1 \to \infty]{\Pr }}
}
\DeclareDocumentCommand{\Asto} {o} {
\IfNoValueTF {#1}
 {\overset{\operatorname{a.s.}}{\longrightarrow}}
 {
 \xrightarrow[ #1 \to \infty]{\operatorname{a.s.} }
% \underset{#1 \to \infty}{\overset{\operatorname{a.s.}}{\longrightarrow}}
 }
}
\DeclareDocumentCommand{\Mgfto} {o} {
\IfNoValueTF {#1}
{\overset{\operatorname{mgf}}{\longrightarrow}}
{ \xrightarrow[ #1 \to \infty]{\operatorname{mgf} }}
}
\DeclareDocumentCommand{\Wkto} {o} {
\IfNoValueTF {#1}
 {\overset{(d)}{\longrightarrow}}
 { \xrightarrow[ #1 \to \infty]{(d) }}
}
\newcommand{\drift}{\mathfrak{f}}
\begin{document}

 \title{The maximum deviation of the $\text{Sine}_\beta$ counting process}
\author{Diane Holcomb and Elliot Paquette}

\maketitle

\begin{abstract}
In this paper, we consider the maximum of the 
$\text{Sine}_\beta$ counting process from its expectation.  
We show the leading order behavior is consistent with the predictions of 
log--correlated Gaussian fields, also consistent with
work on the imaginary part of the log--characteristic polynomial of random 
matrices.
We do this by a direct analysis of the stochastic sine equation, which gives 
a description of the continuum limit of the Pr\"ufer phases of a Gaussian $\beta$--ensemble matrix.  
\end{abstract}

The $\text{Sine}_\beta$ point process (\cite{ValkoVirag}), which arises as the local point process limit of the eigenvalues of $\beta$--ensembles, can be defined in terms of the SDE 
\begin{equation}
  \label{eq:sde}
  d\alpha_{x,t}= x \frac{\beta}{4} e^{- \frac{\beta}{4} t} dt+ \re \left[ \left( e^{-i \alpha_{x,t}}-1\right)dZ_t\right], \qquad \alpha_{x,0}=0.
\end{equation}
Specifically, sending $t \to \infty,$ $\alpha_{x,t}/(2\pi)$ converges for all $x$ to an integer valued limit, which is the counting function of the $\text{Sine}_\beta$ point process.    

We are interested in the question of whether this function is an example of a process that should satisfy log--correlated field predictions. For an overview on work related to log--correlated Gaussian and approximately Gaussian processes see \cite{ArguinSurvey, ZeitouniSurvey}. This question follows naturally from the fact that the counting function of Sine$_\beta$ is a scaling limit of the imaginary part of the logarithm of the characteristic polynomial of random matrices. Such Gaussian log--correlated field predictions have been proven for a variety of matrix models \cite{ABB, PaquetteZeitouni02,CMN, LambertPaquette01}. Similar work has been done for randomized models of the Riemann $\zeta$ function \cite{ABH}, and also for the $\zeta$ function itself \cite{ABBRS, Najnudel}. For further discussion of the connections between the $\zeta$ function and random matrix theory see \cite{KeatingSnaith}.

We consider the process 
$
N(x)=\lim_{t\to \infty} \frac{\alpha_{x,t}-\alpha_{-x,t}}{2\pi},
$
which counts the number of points in the $\text{Sine}_\beta$ point process between $[-x,x]$ for any $x > 0.$  This process exhibits a purer analogy with log--correlated fields (see Remark \ref{rem:otherN} for details). 
We show that:
\begin{theorem}
  \label{thm:goal}
  \[
 \frac{\max_{0\le \lambda\le x}[N(\lambda)- \frac{\lambda}{\pi}]}{\log x} \Prto[x]
  \frac{2}{\sqrt \beta \pi}.
\]
\end{theorem}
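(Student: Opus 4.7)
The approach is to bound $\max_{\lambda \in [0,x]} \phi(\lambda)$, where $\phi(\lambda) := N(\lambda) - \lambda/\pi$, by establishing matching probabilistic upper and lower bounds of $(2/(\sqrt{\beta}\,\pi))\log x$. The process $\phi$ is expected to behave as a log--correlated field with $\Var \phi(\lambda) \sim (2/(\beta\pi^2))\log\lambda$ and $\Cov(\phi(\lambda),\phi(\mu)) \sim (2/(\beta\pi^2))\log(\min(\lambda,\mu)/|\lambda-\mu|)$. A naive first--moment heuristic identifies the right constant: given a Gaussian-type tail $\Pr(\phi(\lambda) > c\log x) \lesssim \exp(-c^2 (\beta\pi^2/4)\log x)$ at the scale $\lambda=x$, the union bound over the $O(x)$ effective degrees of freedom in $[0,x]$ is $o(1)$ precisely when $c > 2/(\sqrt{\beta}\,\pi)$.

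For the \textbf{upper bound}, I would exploit the It\^o decomposition $\alpha_{\lambda,t} = \lambda(1 - e^{-\beta t/4}) + M^\lambda_t$ with martingale part $M^\lambda_t := \int_0^t \re[(e^{-i\alpha_{\lambda,s}}-1)\,dZ_s]$ and the analogous decomposition for $\alpha_{-\lambda,t}$, so that $\phi(\lambda)$ is captured by $(M^\lambda_\infty - M^{-\lambda}_\infty)/(2\pi)$ up to a bounded correction coming from the deterministic drift. Reparameterizing time (e.g.\ $\tau = (4/\beta)\log(1+\beta t/4)$) so that the martingale accumulates quadratic variation at a BRW--like logarithmic rate relative to its Brownian filtration, the goal is a sharp sub--Gaussian tail $\Pr(|\phi(\lambda)| > a) \le C\exp(-a^2 \beta\pi^2/(4\log\lambda)(1-o(1)))$ via exponential martingale/Girsanov manipulations. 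A union bound over a unit--spaced grid in $[0,x]$ with $a = (1+\eps)(2/(\sqrt{\beta}\,\pi))\log x$ then yields the upper bound; monotonicity of $N(\cdot)$ together with smoothness of $\lambda \mapsto \lambda/\pi$ extend it to the continuum max.

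For the \textbf{lower bound}, I would apply a Paley--Zygmund/second--moment argument to a family of well--separated test points $\{\lambda_j\} \subset [0,x]$ at a scale such that pairwise covariances account for only a small fraction of the total variance $v(x)$. Computing $\E$ and $\Var$ of $\sum_j \mathbf{1}\{\phi(\lambda_j) > (1-\eps)(2/(\sqrt{\beta}\,\pi))\log x\}$ requires a matching Gaussian joint--tail lower bound, which in turn rests on a BRW--style coupling of $M^\lambda_t$ across the various $\lambda$ in a common Brownian filtration. The \textbf{main obstacle} in both directions is obtaining Gaussian--regime tail control with the exact prefactor $\beta\pi^2/4$: the diffusion coefficient $(e^{-i\alpha_{\lambda,t}}-1)$ is random and oscillatory, so its squared modulus $2(1-\cos\alpha_{\lambda,t})$ only concentrates around its mean after exploiting equidistribution of the phase in the bulk regime; one must replace $\langle M^\lambda\rangle$ by its deterministic proxy while tracking non--Gaussian corrections precisely enough that the target constant survives, and, for the lower bound, control the joint law of $\alpha_{\lambda_j,\cdot}$ at distinct $\lambda_j$ finely enough to decouple the Paley--Zygmund indicators.
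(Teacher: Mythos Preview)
Your upper bound strategy---sharp sub-Gaussian tails for the martingale via an exponential-martingale/Girsanov computation, followed by a union bound over the integer grid and an appeal to monotonicity of $N$---is exactly what the paper does. The paper packages the tail bound as a proposition obtained by tilting by $\mathcal{E}(\xi M_\lambda)$ and controlling $[M_\lambda]_t-2t=\int_0^t(-2\cos u_{\lambda,s})\,ds$ through an oscillatory-integral estimate; this is precisely the ``equidistribution of the phase'' issue you flag as the main obstacle, and you have correctly anticipated both the mechanism and its resolution.

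The lower bound, however, contains a real gap. Your plan is to run Paley--Zygmund on indicators over a set of \emph{well-separated} test points chosen so that pairwise covariances are a small fraction $\varepsilon$ of the variance. For a log-correlated field with variance $v\sim(2/(\beta\pi^2))\log x$, forcing $\Cov\le\varepsilon v$ means spacing of order $x^{1-\varepsilon}$, hence only $x^{\varepsilon}$ test points; the maximum of $x^{\varepsilon}$ nearly independent Gaussians of variance $v$ is only $\sqrt{2\varepsilon v\log x}=\sqrt{\varepsilon}\cdot(2/(\sqrt{\beta}\,\pi))\log x$, strictly below the target constant. In other words, thinning to decorrelate throws away the entropy needed to reach $c^*=2/(\sqrt{\beta}\,\pi)$.

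The paper avoids this by keeping \emph{all} integer $\lambda\in[x,2x]$ and running a \emph{tilted} second-moment argument: one sets
\[
S_x=\sum_{\lambda=x}^{2x}\mathcal{E}\bigl(\sqrt{\beta}\,M_{\lambda,T_x'}\bigr)\,\one[\mathcal{A}_\lambda],
\]
where $\mathcal{A}_\lambda$ is a tube event constraining $M_{\lambda,t}$ to stay within $R\sqrt{\log x}$ of the linear profile $\sqrt{\beta}\,t$ and $[M_\lambda]_t$ to stay within $R$ of $2t$. The first moment is then $\sum_\lambda Q_{\sqrt{\beta},\lambda}(\mathcal{A}_\lambda)\approx x$, while the tube event is what makes the second moment tractable for nearby pairs $(\lambda_1,\lambda_2)$: on $\mathcal{A}_{\lambda_2}$ one freezes the increment of $M_{\lambda_2}$ past the decorrelation time $T_*=(4/\beta)\log|\lambda_1-\lambda_2|$, and the residual cross-bracket $[M_{\lambda_1},M_{\lambda_2}]_{T_*}$ is controlled by the same oscillatory-integral bound. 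Without the tube event the diagonal contribution $\Exp[\mathcal{E}(\sqrt{\beta}M_{\lambda_1})\mathcal{E}(\sqrt{\beta}M_{\lambda_2})]$ for $\lambda_1\approx\lambda_2$ blows up like $e^{\beta[M_\lambda]_{T_x'}}\approx x^{O(1)}$, so the barrier is not optional in this formulation. If you prefer to work with raw indicators rather than Girsanov weights, you would still need to sum over all $\lambda$ and carry out a genuine branching-random-walk style two-point computation stratified by $|\lambda_1-\lambda_2|$; the well-separated shortcut does not suffice.
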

\noindent Moreover, we do this by a direct argument for the $\text{Sine}_\beta$ process. Another possible approach might be to use the recent \cite{ValkoVirag3}, which gives a coupling between the $\text{Sine}_\beta$ and C$\beta$E point processes, to transfer estimates from the random matrix process to the continuum limit.
% with sufficient precision that one could hope to transfer results between the two processes.
% that avoids a Gaussian coupling.

Observe that as the process $N(\lambda)$ is almost surely non--decreasing, we may immediately replace this maximum over all $0 \leq \lambda \leq x$ by the maximum over any discrete net of $[0,x]$ with maximum spacing $o(\log x).$  Likewise, we may assume that $x$ is an integer.  Going forward, we will take $\lambda$ and $x$ to be integers. The monotonicity of $N(\lambda)$ may be seen from the SDE description by observing that the noise term vanishes at multiples of $2\pi$ and the drift is positive for $\lambda>0$ and negative for $\lambda<0$ (\cite[Proposition 9(ii)]{ValkoVirag}). 

It should be noted there is another SDE description due to \cite{KillipStoiciu} (only recently proven to give rise to the same process by \cite{Nakano}, while another proof follows from \cite{ValkoVirag2}), which can be related to \eqref{eq:sde} by a time--reversal.  This arises due to an order reversal of the Pr\"ufer phases, for which reason the correlation structure is reversed from the previously studied C$\beta$E model. The processes $\alpha_{x,t}$ and $\alpha_{y,t}$ are strongly correlated for large times and weakly correlated for small times.  We elaborate upon the correlation structure in \eqref{eq:Mheuristic}.

\subsection*{Heuristic}

We will name the martingale part of $\alpha_{\lambda, t}-\alpha_{-\lambda,t}$ diffusion:
\begin{equation}
M_{\lambda,t}= \re \int_0^t (e^{-i \alpha_{\lambda,s}}-e^{-i\alpha_{-\lambda,s}})dZ_s.
\end{equation}
As the process $\alpha_{x,t}$ converges for all $x \in \R$ when $t \to \infty,$ so does $M_{\lambda,t}$ converge for all $\lambda \in \R$ when $t \to \infty.$  Moreover, 
\[
  2\pi N(\lambda) - 2\lambda
  =
\re \int_0^\infty (e^{-i \alpha_{\lambda,s}}-e^{-i\alpha_{-\lambda,s}})dZ_s
=M_{\lambda,\infty}.
\]
Therefore we can reformulate Theorem \ref{thm:goal} as
\begin{equation}
  \label{eq:mglegoal}
  \frac{\max_{0\le \lambda\le x}M_{\lambda,\infty} }{\log x} \Prto[x] 
  \frac{4}{\sqrt \beta}
  .
\end{equation}

Let $T_\lambda = \frac4\beta \log \lambda.$  This is heuristically the length of time that $M_{\lambda,t}$ needs to evolve so that it is within bounded distance of its limit.  Specifically, the variables $M_{\lambda,\infty} - M_{\lambda,T_\lambda}$ have a uniform--in--$\lambda$ exponential tail bound:
\begin{proposition}
  There is a constant $C = C_\beta$ so that for all $\lambda,r \geq 0,$
  \[
    \Pr\left[ 
      M_{\lambda,\infty} - M_{\lambda,T_\lambda}
      \geq
      C+
      r
    \right]
    \leq e^{-r/C}.
  \]
  \label{prop:tail}
\end{proposition}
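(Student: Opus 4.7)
The plan is to control $M_{\lambda,\infty}-M_{\lambda,T_\lambda}$ via the exponential martingale inequality combined with an exponential tail bound on the residual quadratic variation
\[
Q := \langle M_\lambda\rangle_\infty - \langle M_\lambda\rangle_{T_\lambda} = \tfrac{8}{\beta} \int_{T_\lambda}^\infty \sin^2\!\left(\tfrac{\alpha_{\lambda,s}-\alpha_{-\lambda,s}}{2}\right) ds,
\]
the factor $8/\beta$ arising because $Z$ has independent real and imaginary parts each of rate $2/\beta$. The exponential inequality yields, for $q,r>0$,
\[
\Pr\bigl[M_{\lambda,\infty}-M_{\lambda,T_\lambda}\geq r,\ Q\leq q\bigr] \leq e^{-r^2/(2q)},
\]
so an exponential tail $\Pr[Q\geq q]\leq C_1 e^{-c_2 q}$ uniform in $\lambda$, combined with the choice $q$ a sufficiently small constant multiple of $r$, implies Proposition \ref{prop:tail}.

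The first step reduces $Q$ to a ``unit-parameter'' quantity via the self-similarity of \eqref{eq:sde} at time $T_\lambda$. Setting $s = t - T_\lambda$ and $\tilde\alpha^{(\pm)}_s := \alpha_{\pm\lambda,T_\lambda+s}$, the processes $\tilde\alpha^{(\pm)}$ satisfy \eqref{eq:sde} with parameters $\pm 1$ respectively (since $\lambda e^{-\beta T_\lambda/4}=1$), driven by the time-shifted Brownian motion $\tilde Z_s := Z_{T_\lambda+s}-Z_{T_\lambda}$, with random initial conditions $\tilde\alpha^{(\pm)}_0 = \alpha_{\pm\lambda,T_\lambda}$. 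Writing $\tilde U := \tilde\alpha^{(+)} - \tilde\alpha^{(-)}$, the tail of $Q$ reduces to
\[
\Pr\!\left[\int_0^\infty \sin^2(\tilde U_s/2)\, ds \geq q\right] \leq C_1 e^{-c_2 q}
\]
uniformly over the initial condition $\tilde U_0$. In the unit process both the remaining drift $\int_0^\infty(\beta/2)e^{-\beta s/4}\,ds=2$ and the noise coefficient are $O(1)$, so $\tilde U_s$ is quickly trapped near a multiple of $2\pi$ (the common zero of the noise and the asymptotic zero of the drift).

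To upgrade this intuition to exponential integrability, I would build an exponential supermartingale from the Lyapunov function $g(U)=-\log|\sin(U/2)|$. An It\^o calculation yields
\[
dg(\tilde U_s)=\tfrac{1}{\beta}\,ds-\tfrac{\tilde\mu_s}{2}\cot(\tilde U_s/2)\,ds-\tfrac{1}{2}\cot(\tilde U_s/2)\,d\tilde M_s,
\]
with $\tilde\mu_s=(\beta/2)e^{-\beta s/4}$ and the martingale part having quadratic variation $(2/\beta)\int_0^\cdot\cos^2(\tilde U_s/2)\,ds$, bounded above by $(2/\beta)s$. The positive linear drift $1/\beta$ records the exponential attraction of $\sin^2(\tilde U/2)=e^{-2g}$ toward $0$. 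For sufficiently small $\theta>0$ I would construct a supermartingale of the form $\exp\bigl(\theta\int_0^t \sin^2(\tilde U/2)\,ds - F_\theta(g(\tilde U_t))\bigr)$ for a bounded function $F_\theta$; Markov's inequality then produces the required exponential tail of $\int_0^\infty \sin^2(\tilde U/2)\,ds$.

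The main obstacle is handling the transient regime in which $\tilde U$ is still wandering (so that the drift of $g$ has not yet produced a significant gain and $\sin^2(\tilde U/2)\approx 1$ contributes substantially to the integral). I would split the time axis at $\tau := \inf\{s \geq 0 : g(\tilde U_s)\geq 1\}$: on $[0,\tau)$, $\sin(\tilde U/2)$ is bounded away from $0$ and $\cot(\tilde U/2)$ is bounded, so the drift of $g$ is $1/\beta + O(1)$ and its noise has bounded diffusion coefficient, from which $\tau$ has exponential tail by a one-dimensional concentration argument. On $[\tau,\infty)$ the Lyapunov-based supermartingale argument is in force. Combining these pieces with the exponential martingale inequality above delivers the proposition.
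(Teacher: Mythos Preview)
Your route is genuinely different from the paper's, and considerably more laborious. The paper does not try to control the residual quadratic variation at all. Instead it works directly with the phases: from
\[
M_{\lambda,\infty}-M_{\lambda,T_\lambda}
= \bigl(\alpha_{\lambda,\infty}-\alpha_{\lambda,T_\lambda}\bigr)
-\bigl(\alpha_{-\lambda,\infty}-\alpha_{-\lambda,T_\lambda}\bigr) - 2,
\]
it compares $\alpha_{\lambda,\cdot}$ after time $T_\lambda$ with the process $v$ that solves the same SDE but with the drift switched off at $T_\lambda$. Since $v$ cannot cross multiples of $2\pi$ once the drift is gone, $|v_\infty-\alpha_{\lambda,T_\lambda}|\le 2\pi$, while $\alpha_{\lambda,\infty}-v_\infty$ has exactly the law of $\alpha_{1,\infty}$ by the self--similarity you also invoke. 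The exponential tail then comes for free from \cite[Proposition~9(viii)]{ValkoVirag}. So the paper replaces your entire Lyapunov/supermartingale construction by a one--line citation, at the cost of importing a result whose proof is of comparable difficulty to what you are sketching.

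Two remarks on your approach itself. First, a normalization slip: in the paper's convention $Z$ is a standard complex Brownian motion with no $\beta$ in the diffusion, and the bracket is $[M_\lambda]_t=\int_0^t 4\sin^2(\cdot)\,ds$; your factor $8/\beta$ should be $4$. Second, and more substantively, your Lyapunov step is only a sketch: you assert the existence of a bounded $F_\theta$ making $\exp(\theta\int_0^t\sin^2-F_\theta(g))$ a supermartingale, but the drift of $g$ contains the unbounded term $-\tfrac{\tilde\mu_s}{2}\cot(\tilde U_s/2)$, and you would need to verify carefully that the generator inequality closes with $F_\theta$ bounded uniformly in the initial condition. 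This is plausible (it is essentially what underlies the cited Valk\'o--Vir\'ag tail bound), but it is the real content of the proof and is not yet done in your proposal. If you want a self--contained argument, this is the gap to fill; if you are willing to cite \cite{ValkoVirag}, the paper's comparison argument is much shorter.
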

\noindent Using the monotonoicity of $N(\lambda),$ we can also show that:
\begin{proposition}
  \[
    \frac{\max_{0\le \lambda\le x} |M_{\lambda,\infty} - M_{\lambda,T_\lambda} | }{\log x} \Prto[x] 0.
\]
\label{prop:timeout}
\end{proposition}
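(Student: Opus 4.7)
The plan is to reduce the continuous supremum over $\lambda \in [0,x]$ to a supremum over the integer grid $\{1,\dots,x\}$ by exploiting monotonicity of $N$, then to apply Proposition~\ref{prop:tail} uniformly over this grid. The main subtlety will be that a naive union bound with the sub-exponential tail of Proposition~\ref{prop:tail} only yields $O(\log x)$, whereas we need $o(\log x)$; this gap will have to be closed by re-applying the tail estimate at a later cutoff time.

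For the discretization, write $D_\lambda := M_{\lambda,\infty} - M_{\lambda,T_\lambda}$. Since $M_{\lambda,\infty} = 2\pi N(\lambda) - 2\lambda$ and $N$ is non-decreasing, for $\lambda \in [i,i+1]$ one has $|M_{\lambda,\infty} - M_{i,\infty}| \le 2\pi(N(i+1)-N(i)) + 2$. A standard estimate for unit-interval counts of $\text{Sine}_\beta$ (combined with a union bound) shows $\max_{i \le x} (N(i+1)-N(i)) = o(\log x)$ in probability, which handles the $M_{\cdot,\infty}$ piece. For $M_{\lambda,T_\lambda}$, split as $M_{\lambda,T_\lambda} - M_{i,T_i} = (M_{\lambda,T_\lambda} - M_{\lambda,T_i}) + (M_{\lambda,T_i} - M_{i,T_i})$: the first summand is a martingale increment over a time interval of length $|T_\lambda - T_i| = O(1/i)$, bounded in probability via BDG and the uniform bound $|e^{-i\alpha_\lambda} - e^{-i\alpha_{-\lambda}}| \le 2$, while the second uses the continuous (in fact Lipschitz) dependence of $\alpha_{\cdot,T_i}$ on its spatial parameter at fixed time.

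Having reduced to integer $\lambda$, I apply Proposition~\ref{prop:tail} at each $\lambda \in \{1,\dots,x\}$, together with an analogous lower-tail bound, which follows either from the same SDE analysis or from monotonicity of $N$ relating $D_\lambda$ to $D_{\lambda-1}$ away from jumps. A union bound over $x$ integers immediately gives $\max_\lambda |D_\lambda| = O(\log x)$ with high probability. To upgrade this to $o(\log x)$, I re-run the SDE argument behind Proposition~\ref{prop:tail} starting from a later cutoff $T_\lambda + K$ with $K = K(x) \to \infty$ (e.g.\ $K = \log\log x$): the phase discrepancy $\alpha_\lambda - \alpha_{-\lambda} - 2\pi N(\lambda)$ contracts at rate $e^{-\beta t/4}$ past $T_\lambda$, so the effective sub-exponential scale for $M_{\lambda,\infty} - M_{\lambda,T_\lambda + K}$ shrinks like $Ce^{-\beta K/4}$. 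The union bound with this tighter scale produces $\max_\lambda |M_{\lambda,\infty} - M_{\lambda,T_\lambda + K}| = o(\log x)$ in probability, while the bridging increment $|M_{\lambda,T_\lambda+K} - M_{\lambda,T_\lambda}|$ is a martingale increment with quadratic variation $O(K)$, so by BDG plus a union bound is $O(\sqrt{K\log x}) = o(\log x)$ uniformly.

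The main obstacle is the final upgrade from $O(\log x)$ to $o(\log x)$: Proposition~\ref{prop:tail} directly gives only the weaker bound, and closing the gap requires using something beyond the verbatim statement -- either the exponential in-time contraction of the phase difference (as sketched) to get a sharper tail at an advanced cutoff, or else a chaining / regularity argument exploiting that $D_\lambda$ has bounded second moment uniformly in $\lambda$ and varies smoothly (modulo the jumps of $N$) with $\lambda$.
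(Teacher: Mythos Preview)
Your approach differs from the paper's, and the step you flag as the main obstacle does contain a genuine gap. The claim that the phase discrepancy ``contracts at rate $e^{-\beta t/4}$ past $T_\lambda$'' is not correct: what decays exponentially is the remaining \emph{drift} $\int_{T_\lambda+K}^\infty 2\lambda\tfrac{\beta}{4}e^{-\beta s/4}\,ds = 2e^{-\beta K/4}$, not the phase itself. At time $T_\lambda+K$ the process $\alpha_{\lambda,t}-\alpha_{-\lambda,t}$ may sit anywhere modulo $2\pi$, so the noise coefficient $2\sin(\cdot/2)$ is still order one and the remaining increment $M_{\lambda,\infty}-M_{\lambda,T_\lambda+K}$ is typically $\Theta(1)$, not $O(e^{-\beta K/4})$. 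What your union bound over $x$ integers actually needs is an improved exponential \emph{rate}, say $\Pr(\alpha_{c,\infty} > r) \leq e^{-r\,g(c)}$ with $g(c)\to\infty$ as $c=e^{-\beta K/4}\to 0$; this is plausible (one expects $g(c)\sim\log(1/c)$ from point repulsion, not $g(c)\sim 1/c$ as your formulation would require), but it is nowhere proven in the paper and lies genuinely beyond Proposition~\ref{prop:tail}.

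The paper sidesteps the whole issue by a different and much shorter argument. Instead of a union over $x$ integers, it partitions $(0,x]$ into dyadic blocks $[x2^{-j-1},x2^{-j}]$. On each block it introduces the comparison diffusion $v_{\lambda,t}$ that agrees with $\alpha_{\lambda,t}$ but has its drift switched off after $T_{x2^{-j-1}}$. The crucial observation (from \cite[Proposition~9(ii)]{ValkoVirag}) is that $\lambda\mapsto\alpha_{\lambda,t}-v_{\lambda,t}$ is \emph{monotone}, so the supremum of $\alpha_{\lambda,\infty}-\alpha_{\lambda,T_\lambda}$ over the entire block is controlled by the single endpoint value $\alpha_{x2^{-j},\infty}-v_{x2^{-j},\infty}$, which has the law of $\alpha_{2,\infty}$ with its fixed exponential tail. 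A union bound over only $O(\log x)$ dyadic scales then yields $\max_\lambda(\alpha_{\lambda,\infty}-\alpha_{\lambda,T_\lambda})\leq C_\beta\log\log x$ with high probability---sharper than the $o(\log x)$ you target, and obtained without any bridging step or improved tail estimate. The lower bound is immediate from the fact that $\alpha_{\lambda,t}$ can never cross below a multiple of $2\pi$.
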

\noindent Hence we need only consider the process $M_{\lambda,t}$ up to time $t=T_\lambda.$ We delay the proofs of these propositions to Section~\ref{sec:background}.

Another representation for $M_{\lambda,t}$ is given by, for all $t \geq 0$ 
\begin{align}
M_{\lambda,t}&= \re \int_0^t (e^{-\frac{i}{2}(\alpha_{\lambda,s}-\alpha_{-\lambda,s})}-e^{-\frac{i}{2}(\alpha_{-\lambda,s}-\alpha_{\lambda,s})})e^{-\frac{i}{2}(\alpha_{\lambda,s}+\alpha_{-\lambda,s})}dZ_s \nonumber \\
&=  \re \int_0^t (e^{-\frac{i}{2}(\alpha_{\lambda,s}-\alpha_{-\lambda,s})}-e^{-\frac{i}{2}(\alpha_{-\lambda,s}-\alpha_{\lambda,s})}) (dV^{(\lambda)}_s+ i dW^{(\lambda)}_s) \nonumber \\
& = \int_0^t 2\sin\left( \tfrac{ \alpha_{\lambda,s}-\alpha_{-\lambda,s}}{2}\right)d W^{(\lambda)}_s. \label{eq:Wlambda}
\end{align}
where $dV^{(\lambda)}_s+ i dW^{(\lambda)}_s=e^{-\frac{i}{2}(\alpha_{\lambda,s}+\alpha_{-\lambda,s})}dZ_s$ is a standard complex Brownian motion.

Hence, the bracket process is given by
\[
  [M_\lambda]_t = \int_0^t 4\sin\left( \tfrac{ \alpha_{\lambda,s}-\alpha_{-\lambda,s}}{2}\right)^2\,ds.
\]
Applying the trig identity $2\sin(x)^2 = 1-\cos(2x),$ and treating the oscillating the term as negligible, we can consider $[M_\lambda]_t \approx 2t,$ for $t \leq T_\lambda.$  This allows us to roughly consider $M_{\lambda,T_\lambda},$ for the purpose of moderate deviations, as a centered Gaussian of variance $2T_\lambda.$

As for the correlation structure, 
\begin{equation}
  \label{eq:bracket}
  \begin{aligned} 
  ~[ M_\lambda, M_\mu ]_t
  &=
  \re
  \int_0^t 
  (e^{-i \alpha_{\lambda,s}}-e^{-i\alpha_{-\lambda,s}})
  (e^{i \alpha_{\mu,s}}-e^{i\alpha_{-\mu,s}})
  \,ds
%  \int_0^t 
%  4
%  \sin\left( \tfrac{ \alpha_{\lambda,s}-\alpha_{-\lambda,s}}{2}\right)
%  \sin\left( \tfrac{ \alpha_{\mu,s}-\alpha_{-\mu,s}}{2}\right)
%  d [W^{(\lambda)}, W^{(\mu)}]_s \\
%  &=
%  \int_0^t 
%  4
%  \sin\left( \tfrac{ \alpha_{\lambda,s}-\alpha_{-\lambda,s}}{2}\right)
%  \sin\left( \tfrac{ \alpha_{\mu,s}-\alpha_{-\mu,s}}{2}\right)
%  \cos
%  \left(  
%  \tfrac{ \alpha_{\lambda,s}+\alpha_{-\lambda,s}}{2}
%  -\tfrac{ \alpha_{\mu,s}+\alpha_{-\mu,s}}{2}
%  \right)
%  \,ds. 
  \end{aligned}
\end{equation}
Approximating $\alpha_{\lambda,t}$ by its drift in the equation above, we are led to the heuristic that $M_\lambda$ and $M_\mu$ behave approximately independently for $t \leq \frac{4}{\beta}\log_+|\lambda-\mu|$ and are maximally correlated for larger $t.$  
%(This is in contrast to \cite{KillipStoiciu} and \cite{CMN} where the ordering of the Pr\"ufer phases is reversed, and hence the decorrelation occurs for large times).
This leads to the cross variation heuristic:
\begin{equation}
  \label{eq:Mheuristic}
  [ M_\lambda, M_\mu ]_{T_\lambda \wedge T_\mu}
  \approx 2(T_\lambda \wedge T_\mu - \tfrac{4}{\beta}\log_+|\lambda-\mu|).
\end{equation}

We can define a Gaussian process that has the exact correlation structure suggested by the heuristics 
in \eqref{eq:Mheuristic}:
\begin{equation}
  G_{\lambda,t} = \re \int_0^t (e^{-i \Exp\alpha_{\lambda,s}}-e^{-i\Exp\alpha_{-\lambda,s}})dZ_s.
  \label{eq:Gdef}
\end{equation}
For this process, we have correlation given by
\[
  [G_\lambda,G_\mu]_t = 4\int_0^t 
  \sin\left( \lambda (1-e^{-\tfrac \beta 4 s})\right)
  \sin\left( \mu (1-e^{-\tfrac \beta 4 s})\right)
  \,ds.
\]
On the supposition that  the maximum of $\lambda \mapsto M_{\lambda,\infty}$ is well modeled by the maximum of the field $(G_{\lambda, T_{\lambda}}, 0 \leq \lambda \leq x)$, we are led to the following conjecture.
\begin{conjecture}
  \label{conj:actual}
  There is a random variable $\xi$ so that
  \[
      \max_{0\le \lambda\le x}(M_{\lambda,\infty})
      -\tfrac{4}{\sqrt{\beta}}\left( \log x  - \tfrac34 \log\log x\right)
      \Wkto[x] \xi.
  \] 
\end{conjecture}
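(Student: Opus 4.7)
The plan is to follow the standard recipe for maxima of log--correlated Gaussian fields, adapted to the non--Gaussian martingale $M_{\lambda,t}$. Propositions \ref{prop:tail} and \ref{prop:timeout} already reduce the problem to $\max_{\lambda \in \{0,\ldots,x\}} M_{\lambda,T_\lambda}$, and the heuristic calculation shows this field has the variance and covariance structure of a log--correlated Gaussian field with parameter $c = 8/\beta$, so that the $\tfrac{4}{\sqrt \beta}(\log x - \tfrac34 \log\log x)$ centering in the conjecture is precisely Bramson's correction for this parameter. The task is therefore to upgrade Theorem \ref{thm:goal} from convergence in probability at scale $\log x$ to convergence in distribution at scale $O(1)$, together with the identification of a limit $\xi$.

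The approach I would take is a multiscale decomposition of $M_{\lambda,T_\lambda}$ along the time variable. Fix a slowly growing $K = K(x)$, let $t_k = k T_x / K$, and decompose $M_{\lambda,T_\lambda} = \sum_{k=1}^K (M_{\lambda, t_k} - M_{\lambda, t_{k-1}})$ over $\lambda$ on an integer grid in $[0,x]$. By \eqref{eq:Mheuristic} the increments on disjoint time intervals are asymptotically independent for $\lambda, \mu$ separated by more than $\exp(\tfrac \beta 4 t_k)$, so the family acquires the branching--random--walk structure needed to drive the analysis. I would carry this out in parallel with a comparison to the exactly Gaussian process $G_{\lambda,t}$ of \eqref{eq:Gdef}, so that, modulo errors vanishing in probability, the problem reduces to convergence of the recentered maximum of $(G_{\lambda, T_\lambda})_{\lambda=0}^x$. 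Identification of $\xi$ as the convolution of a Gumbel with $\tfrac{2}{\sqrt \beta}\log Z_\infty$ for a critical derivative martingale $Z_\infty$ would then follow from the A\"id\'ekon--Madaule analysis of critical branching random walks.

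Making this rigorous requires two technical inputs. The first is a sufficiently strong Gaussian approximation $M \rightsquigarrow G$: at a single $\lambda$ this is a martingale central limit theorem exploiting $2\sin^2(x) = 1 - \cos(2x)$ in the bracket, but one needs it uniformly on the grid in a sense that preserves $O(1)$ information at the maximum; the natural tool is to compare $\alpha_{\lambda,s}$ to its mean via an auxiliary SDE, bound the error by Gr\"onwall, and install a Kolmogorov--type continuity estimate in $\lambda$. The second input, and in my view the main obstacle, is a pair of matching ballot--style estimates for the random paths $s \mapsto M_{\lambda,s}$: an upper bound on the probability that the martingale ever exceeds a linear barrier on $[0,T_\lambda]$, and a lower bound on the probability of paths that stay below the barrier and terminate near it. These estimates are standard in the Gaussian setting, but for the present non--Gaussian martingale they must be run either through a strong pathwise coupling or through direct moment computations exploiting the nearly deterministic growth of $[M_\lambda]_t$. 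If one is willing to invoke the coupling of \cite{ValkoVirag3} with the C$\beta$E model, a non-trivial portion of this barrier analysis can likely be imported from the random--matrix side, but the transfer itself would need to be done at the scale of the maximum, where it is most delicate.
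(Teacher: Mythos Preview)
The statement is labeled \textbf{Conjecture}~\ref{conj:actual}; the paper does \emph{not} prove it. The only accompanying commentary is that \cite{DRZ} would yield full convergence for the Gaussian surrogate $G_{\lambda,T_\lambda}$ of \eqref{eq:Gdef}, together with the caveat that the limit law $\xi$ for the genuine field $M$ may differ from the Gaussian one. There is thus no paper proof against which to compare your attempt.

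Your outline is a reasonable roadmap and correctly identifies the centering from $[M_\lambda]_{T_\lambda}\approx\tfrac{8}{\beta}\log\lambda$, but it is a plan rather than a proof: the two hard inputs you flag---a uniform-in-$\lambda$ coupling $M\rightsquigarrow G$ accurate to $O(1)$ at the maximum, and barrier estimates for the non--Gaussian martingale---are exactly the missing pieces that leave the statement a conjecture. There is also a gap earlier than you locate it: Propositions~\ref{prop:tail} and~\ref{prop:timeout} do \emph{not} reduce the problem to $\max_\lambda M_{\lambda,T_\lambda}$ at the required precision. Proposition~\ref{prop:timeout} gives only $o(\log x)$, and the sharper bound \eqref{eq:sharper} inside its proof gives $O(\log\log x)$, the same order as the $\tfrac{3}{\sqrt\beta}\log\log x$ subleading correction you are trying to isolate; so replacing $M_{\lambda,\infty}$ by $M_{\lambda,T_\lambda}$ without new input already loses the $O(1)$ information. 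Finally, your proposed identification of $\xi$ as Gumbel shifted by the log of a derivative martingale presumes the Gaussian universality class; the paper explicitly leaves open whether the $\text{Sine}_\beta$ decoration modifies $\xi$.
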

\noindent Indeed by a theorem of \cite{DRZ}, full convergence could be proven for the $G_{\lambda,T_{\lambda}}$ field. One might expect that the distribution of $\xi$ is sensitive to the model and so could be different than in the Gaussian case.

\begin{remark} \label{rem:otherN}
  If we instead considered the one--sided problem, we would instead see
  \[
    \frac{
      \max_{0 \leq \lambda \leq x} \left[ \alpha_{\lambda,\infty} - \lambda \right]
    }{
    \log x}
    \Prto[x]
    \frac{4}{\sqrt{2\beta}}.
  \]
  We would be led to considering the martingale
  \[
    V_{\lambda,t}= \re \int_0^t (e^{-i \alpha_{\lambda,s}}-1)dZ_s.
  \]
  which has quadratic variation $[V_\lambda]_t\approx 2t$ for $t< T_\lambda$ and cross variation: 
  \begin{equation}
    \label{eq:newbracket}
    \begin{aligned} 
      ~[ V_\lambda, V_\mu ]_{T_\lambda \wedge T_\mu}
      &=
      \re
      \int_0^t 
      (e^{-i \alpha_{\lambda,s}}-1)
      (e^{i \alpha_{\mu,s}}-1)
      \,ds
      \approx T_\lambda \wedge T_\mu  + \frac12 [ M_\lambda, M_\mu ]_{T_\lambda \wedge T_\mu}.
    \end{aligned}
  \end{equation}
  Thus, the process has an additional positive correlation, which is heuristically equivalent to adding a common standard normal of variance $\tfrac{4}{\beta}\log x$ to every $V_{\lambda,\infty}$ for $\delta x \leq \lambda \leq x.$  In particular this is too small to change the behavior of the maximum.  As working with $V_{\lambda,t}$ does not materially change the argument, we have not pursued it here.
\end{remark}

%The process $M_{\lambda,\infty}$ is an example of an almost Gaussian log--correlated field.  

\section{Background tools}
\label{sec:background}
We begin with the proofs of Propositions \ref{prop:tail} and \ref{prop:timeout}.
These rely heavily on basic properties of the diffusion established in \cite[Proposition 9]{ValkoVirag}.
\subsection*{ Delayed proofs from introduction }
\begin{proof}[Proof of Proposition \ref{prop:tail}]
  Observe first by integrating the drift
  \begin{equation}
    \label{eq:subtract1}
    M_{\lambda,\infty} - M_{\lambda,T_\lambda}
    =\alpha_{\lambda,\infty} - \alpha_{\lambda,T_\lambda}
    -1.
  \end{equation}
  Consider the process $v$ that satisies
  \[
    dv_{t}= \lambda \frac{\beta}{4} e^{- \frac{\beta}{4} t}\one[t \leq T_\lambda] dt+ \re \left[ \left( e^{-i v_{t}}-1\right)dZ_t\right], \qquad v_{0}=0.
  \]
  Then $\alpha_{\lambda,t}$ and $v_t$ are equal until $T_\lambda.$  After this time, $v$ never crosses another multiple of $2\pi.$  Moreover, it eventually converges to a multiple of $2\pi$ (\cite[Proposition 9(iv)]{ValkoVirag}).  Hence we have
  \begin{equation}
    \label{eq:ua}
    |v_\infty - \alpha_{\lambda,T_{\lambda}}| \leq 2\pi.
  \end{equation}
  On the other hand $\alpha_{\lambda,\infty}-v_\infty$ has the same law as $\alpha_{1,\infty}$.
  By \cite[Proposition 9(viii)]{ValkoVirag}, this has an exponential tail bound.
\end{proof}

\begin{proof}[Proof of Proposition~\ref{prop:timeout}]
  By \eqref{eq:subtract1}, it suffices to show the same for $\alpha_{\lambda,\infty} - \alpha_{\lambda,T_\lambda}.$  The diffusion $\alpha_{\lambda,t}$ can not cross below an integer multiple of $2\pi.$  Hence if $s \leq t,$ for all $\lambda \geq 0$
  \(
   \alpha_{\lambda,s} \leq \alpha_{\lambda,t} + 2\pi.
  \)
  This implies 
  \[
    \min_{0 < \lambda \leq x} \left(\alpha_{\lambda,\infty} - \alpha_{\lambda,T_\lambda}\right) \geq -2\pi,
  \]
  and it suffices to consider an upper bound.
For $x/2 \leq \lambda \leq x,$ we can estimate
\[
  \alpha_{\lambda,\infty} - \alpha_{\lambda,T_\lambda}
  \leq
  \alpha_{\lambda,\infty} - \alpha_{\lambda,T_{x/2}} + 2\pi
\]
Let $v_\lambda$ satisfy
\[
  dv_{\lambda,t}= \lambda \frac{\beta}{4} e^{- \frac{\beta}{4} t}\one[t \leq T_{x/2}] dt+ \re \left[ \left( e^{-i v_{\lambda,t}}-1\right)dZ_t\right], \qquad v_{\lambda,0}=0.
\]
As $v_\lambda$ can not cross multiples of $2\pi,$ for any $\lambda\in \R,$ after $T_{x/2},$ we have
\[
  \alpha_{\lambda,\infty} - \alpha_{\lambda,T_{x/2}} + 2\pi
  \leq
  \alpha_{\lambda,\infty} - v_{\lambda,\infty} + 4\pi.
\]
On the other hand $\alpha_{\lambda,t} - v_{\lambda,t}$ is monotone increasing in $\lambda$ almost surely (as the difference for parameters $\lambda_1>\lambda_2$ satisfies an SDE that can not cross below $0$, c.f.\ \cite[Proposition 9(ii)]{ValkoVirag}).  
Combining the work so far, we have the bound
\[
  \max_{x/2 \leq \lambda \leq x}
  \left(
  \alpha_{\lambda,\infty} - \alpha_{\lambda,T_\lambda}
  \right)
  \leq
  \alpha_{x,\infty} - v_{x,\infty} + 4\pi.
\]
Using the equality in law given by
\[
  \left(
  \alpha_{x,t+T_{x/2}} - v_{x,t+T_{x/2}}, t \geq 0
  \right) 
  \lawequals 
  \left(\alpha_{2,t}, t \geq 0\right),
\]
and by \cite[Proposition 9(viii)]{ValkoVirag}, $\alpha_{2,\infty}$ has an exponential tail bound depending only on $\beta.$  Applying the same argument for $j\in \N$ and $x 2^{-j-1} \leq \lambda \leq x 2^{-j},$ we may use a union bound up to $j$ on the order of $\log x$ to conclude that there is a constant $C_\beta$ so that
\begin{equation}\label{eq:sharper}
    \max_{0 < \lambda \leq x} \left(\alpha_{\lambda,\infty} - \alpha_{\lambda,T_\lambda}\right) \leq C_\beta\log\log x
\end{equation}
with probability going to $1$ as $x \to \infty.$
\end{proof}

\subsection*{Oscillatory integrals}

For each $\lambda \in \R,$ suppose that
$A_{\lambda,t}$ is an adapted finite variation process so that $|A_{\lambda,t}| \leq \xi \in (0,\infty)$ for all time almost surely 
and suppose that $X_{\lambda,t}$ is a martingale satisfying $d[X_{\lambda}]_t \leq 2.$
Suppose that
\begin{equation}
  \label{eq:gusde}
  du_{\lambda, t}
  =
  \lambda \tfrac{\beta}{4}e^{-\tfrac \beta 4 t}dt
  +A_{\lambda,t} dt
  +dX_{\lambda,t},\quad\quad u_{\lambda,0} = 0.
\end{equation}

\begin{proposition}
\label{prop:oscbnd}
Let $u_{\lambda,t}$ satisfy \eqref{eq:gusde} and let $\drift (t) = \frac{\beta}{4}e^{-\frac{\beta}{4}t}$, then for each fixed $\beta>0$ there exist constants $R$ and $\gamma$ uniform in $T$ and $\lambda, a\in \R$ such that 
\begin{align}
  \Exp\left[\sup_{0 \le t \le T} \left|\int_0^t e^{i a u_{\lambda,s}} ds\right| \right] \le \frac{R(1+|\xi|)}{|a\lambda| \drift(T)},
\end{align}
and for all $C>0$
\begin{align}
\Pr\left(\sup_{0\le t \le T}\left|\int_0^t e^{i a u_{\lambda,s}}ds\right|- \frac{R(1+|\xi|)}{|a\lambda| \drift(T)} \ge C \right) \le \exp \left[ - \gamma C^2 a^2 \lambda^2 \drift(T)^2\right].
\end{align}
\end{proposition}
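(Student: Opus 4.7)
The plan is to reduce the oscillatory integral to martingale and deterministic pieces via stochastic integration by parts. The key observation is that the dominant drift $\lambda\drift(s)\,ds$ in $du_{\lambda,s}$ is exactly what is needed to differentiate the phase $e^{iau_{\lambda,s}}$, up to a factor of $ia$; inverting this ratio supplies the natural integrating weight.

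Concretely, I would apply It\^o's formula to the process $t \mapsto e^{iau_{\lambda,t}}/(ia\lambda\drift(t))$. Using $\drift'(t) = -(\beta/4)\drift(t)$ and decomposing $du_{\lambda,t}$, the $\lambda\drift(t)\,dt$ contribution from $du$ cancels the term coming from differentiating the weight $1/\drift(t)$ and produces the desired integrand $e^{iau_{\lambda,t}}\,dt$. Rearranging yields the identity
\begin{align*}
\int_0^t e^{iau_{\lambda,s}}\,ds
&= \left[\frac{e^{iau_{\lambda,s}}}{ia\lambda\drift(s)}\right]_0^t
- \int_0^t \frac{A_{\lambda,s}\,e^{iau_{\lambda,s}}}{\lambda\drift(s)}\,ds
- \int_0^t \frac{(\beta/4)\,e^{iau_{\lambda,s}}}{ia\lambda\drift(s)}\,ds \\
&\quad - \int_0^t \frac{e^{iau_{\lambda,s}}}{\lambda\drift(s)}\,dX_{\lambda,s}
- \int_0^t \frac{ia\,e^{iau_{\lambda,s}}}{2\lambda\drift(s)}\,d[X_\lambda]_s.
\end{align*}

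With this identity in hand the remaining work is bookkeeping. Using $|e^{iau}| \equiv 1$, $|A_{\lambda,s}| \le \xi$, $d[X_\lambda]_s \le 2\,ds$, and the one-line estimate $\int_0^T ds/\drift(s) \le (4/\beta)/\drift(T)$ (which follows from $d(1/\drift)/ds = (\beta/4)/\drift$), the boundary term and both finite-variation integrals are deterministically bounded by a $\beta$-dependent constant times $(1+\xi)/(|a\lambda|\drift(T))$. The stochastic integral $N_t := \int_0^t e^{iau_{\lambda,s}}/(\lambda\drift(s))\,dX_{\lambda,s}$ is a complex continuous martingale whose real and imaginary parts each have bracket at most $C_\beta/(\lambda\drift(T))^2$; BDG then controls $\Exp\sup_{t\le T}|N_t|$ at the correct scale, giving the expectation estimate. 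For the concentration statement only $N_t$ is random, and an exponential martingale inequality applied to its real and imaginary parts produces the advertised sub-Gaussian tail, with all non-martingale pieces absorbed into the subtracted constant $R(1+|\xi|)/(|a\lambda|\drift(T))$.

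The main obstacle I expect is the second-order It\^o correction $\int \frac{ia\,e^{iau_{\lambda,s}}}{2\lambda\drift(s)}\,d[X_\lambda]_s$: its explicit factor of $a$ is not cancelled by the weight $1/(a\lambda\drift)$, so a naive bound yields magnitude of order $|a|/(|\lambda|\drift(T))$ rather than the advertised $1/(|a\lambda|\drift(T))$. Matching the stated scaling therefore demands either a second stage of integration by parts to re-oscillate this term, or a restriction to the bounded-$|a|$ regime relevant to the main application (where $a\in\{1,2\}$ arises via $2\sin^2 = 1-\cos(2\cdot)$). The same subtlety governs the $a^2$ factor in the concentration exponent, which tracks only after care is taken with this correction.
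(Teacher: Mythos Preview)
Your approach is correct and yields the same term-by-term bounds as the paper's, though the mechanics differ slightly. The paper first splits $u_{\lambda,s} = \lambda H(s) + \mathcal{R}_s$ with $H(s) = 1 - \tfrac{4}{\beta}\drift(s)$ deterministic and $\mathcal{R}_s = \int_0^s(A_{\lambda,r}\,dr + dX_{\lambda,r})$, sets $\Lambda(t) = \int_0^t e^{ia\lambda H(s)}\,ds$, and applies It\^o integration by parts to $\int_0^t e^{ia\mathcal{R}_s}\,d\Lambda(s)$; an ordinary integration by parts then gives $|\Lambda(t)| \le C_\beta/(|a\lambda|\drift(t))$, after which the finite-variation and martingale pieces are estimated exactly as you propose. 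Your direct application of It\^o to $e^{iau_{\lambda,t}}/(ia\lambda\drift(t))$ collapses these two steps into one, since $1/(ia\lambda\drift(t))$ is precisely the leading stationary-phase approximation to $\Lambda(t)/e^{ia\lambda H(t)}$; the resulting estimates agree.

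Your concern about the $a$-scaling of the It\^o correction is legitimate and is in fact shared by the paper's own argument: there the finite-variation contribution comes out as $C_\beta(|a|\xi + a^2)/(|a\lambda|\drift(T))$ and the martingale bracket as $C_\beta/(\lambda^2\drift(T)^2)$ (the factor $a$ from the integrand and the $1/|a|$ from $\Lambda$ cancel), so neither the subtracted constant $R(1+|\xi|)/(|a\lambda|\drift(T))$ nor the tail exponent $\gamma C^2 a^2\lambda^2\drift(T)^2$ is obtained with the stated $a$-dependence. In every subsequent use of the proposition $|a|$ is bounded (effectively $a\in\{\pm 1,\pm 2\}$, coming from $2\sin^2 = 1-\cos 2(\cdot)$ and from expanding the cross-variation integrand), so the discrepancy is immaterial and no second round of integration by parts is required.
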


%{\red fix the a's and b's double check theorem statement}
\begin{proof}
    The theorem is vacuous if $a\lambda = 0,$ so we may assume this is not the case.
  Writing $u_t$ in its integrated form, we have
  \[
    \begin{aligned}
     u_t &= \lambda\left(1-\frac{4}{\beta}\drift(t)\right) + \mathcal{R}_t \\
\mathcal{R}_t
&= \int_0^t 
\left\{ A_{\lambda,s}ds + dX_{\lambda,s} \right\}
.
  \end{aligned}
  \]
Let $H(t) = 1-\frac{4}{\beta}\drift(t)$ and $\Lambda(t) = \int_0^t e^{i a \lambda H(s)}ds$ , then we may use It\^o integration by parts to get  
\begin{align}
  \label{eq:ibp}
\int_0^t e^{i a\lambda u_s} ds 
= \int_0^t e^{i a \lambda H(s)} e^{i a\mathcal{R}_s} ds 
= e^{i a\mathcal{R}_t}\Lambda(t)
- \int_0^t \Lambda(s) i a e^{i a\mathcal{R}_s}d\mathcal{R}_s 
+ \frac{a^2}{2}\int_0^t \Lambda(s) e^{i a\mathcal{R}_s} d[\mathcal{R}]_s.
 \end{align}
 Now observe that $\Lambda(t)$ may be bounded in the following way:
\begin{align*}
\int_0^{t} e^{i a\lambda  H(s)}ds  
&= \int_0^{t} \frac{1}{i a\lambda \drift(s)} \frac{d}{ds} e^{i a \lambda  H(s)}ds
= \frac{ 4 e^{\frac{\beta}{4}t}}{\beta i a \lambda}
\left\{e^{i a\lambda  H(t)} - 1 \right\}
- \frac{1}{i a\lambda } \int_0^{t} e^{\frac{\beta}{4}s}  
\left\{e^{i a \lambda  H(s)}-1\right\}ds.
\end{align*}
This gives us $|\Lambda(s)| \le \frac{16}{\beta |a \lambda|} e^{\frac{\beta}{4}t}$. Applying this to our integrated equation we get for the finite variation terms 
\[
\left| \int_0^t \Lambda(s)e^{i \mathcal{R}_s} 
 aA_{\lambda,s} ds +  
\frac{a^2}{2}\int_0^t \Lambda(s)e^{i a \mathcal{R}_s} d[\mathcal{R}]_s   \right| 
\le 
\frac{16}{\beta a \lambda}
e^{\frac{\beta}{4}t}
\left( |a|\xi + a^2 \right).
\]

By \eqref{eq:ibp} and the triangle inequality, it remains to show the desired tail bound and supremum bound for the martingale $V_t$ given by
\[
  V_t =  \int_0^t \Lambda(s) i a e^{i a\mathcal{R}_s}\cdot dX_{\lambda,s} 
\]
Note we have an easy bracket bound, for $\sigma \in \left\{ 1, i \right\}$ given by
\[
[\Re (\sigma V)]_t \leq \int_0^t 2\Lambda(s) a^2\,ds \leq \frac{C_\beta}{ \lambda ^2} |a| e^{\tfrac{\beta}{2} t}
\]
for some constant $C_\beta.$
Hence the desired bounds follow immediately from the Dambis--Dubins--Schwarz theorem (\cite[Theorem V.1.6]{RevuzYor} or \cite[Theorem II.42]{Protter}) and Doob's inequality.
\end{proof}

\subsection*{Tilting}
We now want to look at the measure tilted so that $W^{(\lambda)}$ (see \eqref{eq:Wlambda}) has a drift. In particular 
for deterministic $\eta \in \R,$
we consider the measure $Q_{\xi,\lambda}$ so that  
\[
  dX_s = dW^{(\lambda)}_s - \eta \sin\left( \tfrac{ \alpha_{\lambda,s}-\alpha_{-\lambda,s}}{2}\right) ds
\]
is a standard Brownian motion up to time $T$ under $Q_{\eta,\lambda}$. By Girsanov (see e.g.\ \cite[Theorem III.8.46]{Protter}) we get that 
\begin{equation}
  \label{eq:girsanov}
  \frac{dQ_{\eta,\lambda}}{d\Pr} = 
  \mathcal{E}(\eta M_\lambda)
  =
  \exp( \eta M_{\lambda,T} - \tfrac{\eta^2}{2}[M_\lambda]_T)
%\exp\left[ \rho \int_0^T\sin\left( \tfrac{ \alpha_{\lambda,s}-\alpha_{-\lambda,s}}{2}\right)dW^{(\lambda)}_s -\frac{\rho^2}{2} \int_0^T \sin^2\left( \tfrac{ \alpha_{\lambda,s}-\alpha_{-\lambda,s}}{2}\right)ds\right].
\end{equation}
Since $\sin^2(x) \leq 1$ we have that the bracket process of $[M_\lambda]_t \leq T$ almost surely for all $t \geq 0$.  In particular, the exponential martingale is uniformly integrable by Novikov's condition for all $\eta \in \R$.

Under $Q_{\eta, \lambda}$ the law of $\alpha_{\lambda,t} - \alpha_{-\lambda,t}$ changes; it can be succinctly described as the solution to
\begin{equation}
  \label{eq:usde}
  du_{\lambda,\eta,t}
  =
  2\lambda \tfrac{\beta}{4}e^{-\tfrac \beta 4 t}dt
  +2\eta\sin\left( \tfrac{ u_{\lambda,\eta,t}}{2}\right)^2dt
  +2\sin\left(\tfrac{u_{\lambda,\eta,t}}{2}\right)dX_t,\quad\quad u_0 = 0
\end{equation}
for a Brownian motion $dX,$ which we call the \emph{accelerated stochastic sine equation} with acceleration $\eta$.  Let $M_{\lambda,\eta,t}$ be the martingale part of $u_{\lambda,\eta,t}.$

\subsection*{Martingale bounds}

Using the Girsanov transformation, we now give a nearly sharp tail bound for $M_\lambda.$
\begin{proposition}
  For any $\eta \in \R,$ there is an $R > 0$ so that for all $\lambda > 0,$  all $T \leq T_\lambda$
\[
\Pr\left(\sup_{0\le t \le T} M_{\lambda,\eta,t} \ge C\right) 
\le \exp\left[\frac{-C^2}{4(T+R)}\left( 1- \frac{C^2R}{2(T+R)^3} \right) \wedge \frac{-C^{4/3}}{4T^{1/3}} \right].
\]
and
\[
\Pr\left(\inf_{0\le t \le T} M_{\lambda,\eta,t} \le -C\right) 
\le \exp\left[-\frac{C^2}{4(T+R)} \left( 1- \frac{C^2R}{2(T+R)^3} \right) \wedge \frac{-C^{4/3}}{4T^{1/3}}\right]
\]
\label{prop:mgle}
\end{proposition}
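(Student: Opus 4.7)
My proof strategy combines the exponential martingale (Doob--Markov) bound with precise control on the bracket process via Proposition~\ref{prop:oscbnd}. Under $Q_{\eta,\lambda}$, the process $M_{\lambda,\eta,t}$ is a continuous martingale with $d[M_{\lambda,\eta}]_t = 4\sin^2(u_{\lambda,\eta,t}/2)\,dt \le 4\,dt$, so Novikov's criterion holds and the exponential martingale $\mathcal{E}_t(\theta) := \exp(\theta M_{\lambda,\eta,t} - \tfrac12\theta^2 [M_{\lambda,\eta}]_t)$ is a true martingale for every $\theta \in \mathbb R$. The two-sided statement follows by symmetry from applying the argument to $-M_{\lambda,\eta,t}$, whose SDE has the same structure; I focus below on the upper tail.

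The key decomposition is the trigonometric identity $4\sin^2(x/2) = 2 - 2\cos(x)$, which gives
\[
[M_{\lambda,\eta}]_t = 2t - I_t, \qquad I_t := \int_0^t 2\cos(u_{\lambda,\eta,s})\,ds.
\]
Under $Q_{\eta,\lambda}$, the process $u_{\lambda,\eta,t}$ satisfies an SDE of the form \eqref{eq:gusde} with effective parameter $2\lambda$, finite-variation drift bound $\xi = 2|\eta|$, and a martingale part whose bracket is at most $4\,dt$ (a factor absorbed into the constants in Proposition~\ref{prop:oscbnd}). Applying that proposition to $I_T = 2\,\re\int_0^T e^{iu_{\lambda,\eta,s}}\,ds$ with $a=1$ yields constants $R_0, \gamma_0 > 0$ (depending only on $\beta, \eta$) such that for every $T \le T_\lambda$,
\[
\Pr\Bigl[\sup_{t \le T}|I_t| \ge R_0 + c\Bigr] \le \exp(-\gamma_0 c^2), \qquad c \ge 0.
\]

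The two bounds in the proposition then come from the same master inequality. Fix $K \ge R_0$ and $\theta > 0$. On the event $\{\inf_{t \le T} I_t \ge -K\}$ we have $[M_{\lambda,\eta}]_t \le 2t + K \le 2T + K$, so Doob's inequality (with optimization $\theta^\ast = C/(2T+K)$), combined with the oscillatory tail on its complement, gives
\[
\Pr\Bigl[\sup_{t \le T} M_{\lambda,\eta,t} \ge C\Bigr] \le \exp\Bigl(-\tfrac{C^2}{4(T+K/2)}\Bigr) + \exp\bigl(-\gamma_0(K - R_0)^2\bigr).
\]
Taking $K = 2R$ for $R$ a large fixed constant yields the first bound of the proposition: the tail term is absorbed into a second-order perturbation of $\exp(-C^2/(4(T+R)))$, producing exactly the correction factor $(1 - C^2R/(2(T+R)^3))$ (and the first bound becomes vacuous precisely when that factor turns negative). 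Taking $K = K(C,T)$, chosen so that the two exponents on the right are comparable, produces the Cram\'er-type second bound $\exp(-C^{4/3}/(4T^{1/3}))$.

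The main technical obstacle is the precise bookkeeping in these two choices of $K$. For the first bound one must choose $R$ large enough in terms of $\gamma_0$ and $R_0$ so that the tail term $\exp(-\gamma_0(2R-R_0)^2)$ is strictly dominated by the second-order perturbation of $\exp(-C^2/(4(T+R)))$, and then verify that the resulting correction factor has exactly the stated shape; for the Cram\'er bound one needs a careful optimization over $K$ to match the constant $1/(4T^{1/3})$. Neither step is deep, but each requires tracking the exact constants coming out of Proposition~\ref{prop:oscbnd}.
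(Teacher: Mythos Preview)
Your master inequality is correct, and balancing $K$ with $C$ does recover a bound of order $\exp(-cC^{4/3})$ in the large-$C$ regime, which is at least as strong as the proposition's second term. The gap is in the first bound. With $K=2R$ fixed, the tail term $D:=\exp\bigl(-\gamma_0(2R-R_0)^2\bigr)$ is a positive constant independent of $C$ and $T$; it cannot be ``absorbed into a second-order perturbation'' to produce the factor $\bigl(1-C^2R/(2(T+R)^3)\bigr)$. Concretely, at $C^2=(T+R)^3/R$ the proposition's first bound equals $\exp\bigl(-(T+R)^2/(8R)\bigr)$, which tends to $0$ with $T$, while your bound remains at least $D$ for every $T$. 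Any such additive constant also renders the estimate useless for the union bound in the one-point upper bound later in the paper. Letting $K$ grow with $C$ kills the constant but then perturbs the Gaussian exponent in a different way, and it is not clear how to recover the specific correction factor from the splitting argument.

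The paper's route is structurally different and explains where that correction actually comes from. Rather than splitting on an event, it writes $\Exp e^{\xi M_T}=\hat Q_E\bigl(e^{\xi^2[M]_T/2}\bigr)$ via Girsanov and then applies Proposition~\ref{prop:oscbnd} to the bracket \emph{under the tilted measure} $\hat Q$. Under $\hat Q$ the process solves the accelerated stochastic sine equation with acceleration $\xi+\eta$ rather than $\eta$, so the oscillatory estimate carries a mean shift of order $|\xi|$; integrating the sub-Gaussian tail against $e^{\xi^2/2\,\cdot}$ then yields $\Exp e^{\xi M_T}\le \exp\bigl(\xi^2(T+S)+S\xi^4\bigr)$ for some $S=S(\eta)$. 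Setting $\xi=C/(2(T+S))$, the extra $S\xi^4$ term is exactly what produces the correction $\bigl(1-C^2R/(2(T+R)^3)\bigr)$, while the choice $\xi\sim (C/T)^{1/3}$ gives the second bound. Your un-tilted argument never sees this $\xi$-dependent shift, which is why the factor cannot emerge from a fixed-$K$ choice.
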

\begin{remark}
  For $C$ up to the order of magnitude of $T^{3/2}$ the Gaussian tail majorizes the martingale tail.  For larger $C,$ the second term majorizes the martingale tail.  For much much larger $C$ (on the order $T^2$) a small change in the proof gives decay of order $e^{-cC^{4/3}}.$  A large deviations principle for $N_{\lambda}$ is proven in \cite{DHBV} which suggests a stronger tail bound ought to be true.
\end{remark}

\begin{proof}

  Let $X_t$ be a standard Brownian motion, and let $w$ solve \eqref{eq:usde} the accelerated stochastic sine equation with acceleration $\eta.$  Let $M$ be the martingale part of $w.$
Let $\xi \in \R,$
and apply Doob's inequality to the submartingale $e^{\xi M_{t}}$ to get 
\[
  \Pr\left( \sup_{0\le t \le T} M_{t} \ge C\right) \le e^{-\xi C} \Exp(e^{\xi M_{T}}).
\] 
Applying \eqref{eq:girsanov}, we have that
\[
 \Exp(e^{\xi M_{T}})
 =
 \Exp\left(\mathcal{E}(\xi M_{T}) e^{\tfrac{\xi^2}{2} [M]_T}\right)
 =
 \hat Q_E\left(e^{\tfrac{\xi^2}{2} [M]_T}\right),
\]
with $\hat Q_E(\cdot)$ the expectation under the probability measure $\hat Q$ defined by
\[
  \frac{d\hat Q}{d \Pr}
  = \mathcal{E}(\xi M_{T}).
\]
By the Girsanov theorem,
\[
  dY_s = dX_s - 
  \xi
  \sin\left( \tfrac{ w_t}{2} \right)\,ds
\]
is a $\hat Q$--Brownian motion.  Hence, 
\[
M_{t} 
= \int_0^t 2\sin\left( \tfrac{w_s}{2}\right)dY_s
+ \int_0^t 2\xi\sin\left( \tfrac{w_s}{2}\right)^2 ds.
\]
Further, the law of $w_s$ changes under $\hat Q,$ as we have that
\[
  dw_{t}
  =
  2\lambda \tfrac{\beta}{4}e^{-\tfrac \beta 4 t}dt
  +2(\xi+\eta)\sin\left( \tfrac{ w_t}{2}\right)^2dt
  +2\sin\left(\tfrac{w_t}{2}\right)dY_t,\quad\quad w_0 = 0.
\]
Hence, under $\hat Q,$ $w$ is a solution of the accelerated stochastic sine equation with acceleration $\xi+\eta.$ 

As for the bracket, we have that for $t \leq T$
\[
  [M_\lambda]_t
  = 
  \int_0^t 4\sin\left( \tfrac{w_s}{2}\right)^2\,ds
  = 
  2t - 
  \int_0^t 2\cos\left( {w_s}\right)\,ds.
\]
Using Proposition~\ref{prop:oscbnd}, we have that for $T \leq T_\lambda,$ there is an $R$ independent of $\xi$ and $\eta$ so that for all $C>0$
\[
  \hat Q\left(\int_0^T -2\cos\left( {w_s}\right)\,ds \geq R(1+|\xi+\eta|) + C\right) \leq e^{-C^2/R}.
\]
Therefore, we have that for $T \leq T_\lambda$ 
\[
 \hat Q_E\left(e^{\tfrac{\xi^2}{2} [M_{\lambda}]_T}\right)
 =e^{\xi^2 T}\hat Q_E\left( \exp\left(\int_0^T -\xi^2\cos\left( {w_s}\right)\,ds\right)\right)
 \leq 
 e^{\xi^2(T+S) + S\xi^4}
\]
for some constant $S>0$ independent of $\xi,\lambda$ or $T$ but depending on $\eta.$  

There remains to optimize in $\xi.$  From the work so far, we have
\[
  \Pr\left( \sup_{0\le t \le T} M_{t} \ge C\right) 
  \le e^{-\xi C} \Exp(e^{\xi M_{T}})
  \leq
 e^{-\xi C + \xi^2(T+S) + S|\xi|^3}.
\]
Taking $\xi = \tfrac{C}{2(T+S)},$% + \tfrac{C^2M}{8(T+M)^3}$
\[
  \Pr\left( \sup_{0\le t \le T} M_{t} \ge C\right) 
  \leq
  \exp\left[
    -\frac{C^2}{4(T+S)} + \frac{S C^4}{8(T+S)^4}
  \right],
\]
and taking $\xi = (C/(4T+4S))^{1/3}$ gives
\[
  \Pr\left( \sup_{0\le t \le T} M_{t} \ge C\right) 
  \leq
  \exp\left[
    -\frac{3C^{4/3}}{4(4(T+S))^{1/3}} + \frac{C^{2/3}(T+S)^{1/3}}{4^{2/3}}
  \right].
\]
Hence the desired bound holds
by taking the second bound for $C > P(T+S)$ and $P$ sufficiently large, and the first 
bound for $C \leq P(T+S).$

The statement about the infimum may be proved in an identical fashion by reformulating it as an equivalent bound on the supremum of $-M_\lambda$. We would then use the submartingale $e^{-\xi M_\lambda}$ and use $[M_\lambda]_t = [-M_\lambda]_t$. 

\end{proof}
\section{Main theorem}
\label{sec:main}
\subsection*{The one--point upper bound}

Using Proposition~\ref{prop:mgle} with $\eta=0$, we can give the upper bound in \eqref{eq:mglegoal}.  
\begin{proposition}
  For any $\delta > 0$
  \begin{equation*}
    \lim_{x \to \infty}
    \Pr\left(
    {\max_{0\le \lambda\le x}M_{\lambda,T_\lambda} } >  \left(\frac{4}{\sqrt \beta} + \delta\right){\log x}  
    \right) = 0
\end{equation*}
\end{proposition}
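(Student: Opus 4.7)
The strategy is a straightforward union bound over integer $\lambda$, using the Gaussian branch of the tail bound in Proposition~\ref{prop:mgle} with $\eta = 0$. As noted in the introduction, we may restrict attention to integer $\lambda \in \{1, \ldots, x\}$.

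First, I would fix $C = (4/\sqrt{\beta} + \delta)\log x$ and, for each $\lambda \in \{2, \ldots, x\}$, apply Proposition~\ref{prop:mgle} at $T = T_\lambda = (4/\beta)\log \lambda$. Bounding the $\min$ in the exponent by its first (Gaussian) argument yields
\[
\Pr(M_{\lambda, T_\lambda} \geq C) \leq \exp\Bigl[-\frac{C^2}{4(T_\lambda + R)}\Bigl(1 - \frac{C^2 R}{2(T_\lambda + R)^3}\Bigr)\Bigr].
\]
Since both $C$ and $T_\lambda$ are $\Theta(\log x)$, the correction $C^2 R/(2(T_\lambda + R)^3)$ is $O(1/\log x) = o(1)$. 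Using the uniform upper bound $T_\lambda \leq T_x = (4/\beta)\log x$, I compute
\[
\frac{C^2}{4(T_\lambda + R)}(1 - o(1)) \geq \frac{\beta(4/\sqrt{\beta} + \delta)^2}{16}\log x\,(1 - o(1)) = (1 + \delta_1)\log x,
\]
where direct expansion gives $\delta_1 = \tfrac{\delta\sqrt{\beta}}{2} + \tfrac{\beta\delta^2}{16} + o(1) > 0$. Hence $\Pr(M_{\lambda, T_\lambda} \geq C) \leq x^{-1 - \delta_1/2}$ for all sufficiently large $x$, uniformly over $\lambda \in \{2, \ldots, x\}$. The boundary case $\lambda = 1$ is trivial because $T_1 = 0$ forces $M_{1, T_1} = 0 < C$ deterministically.

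A union bound over the $\leq x$ integer values of $\lambda$ then yields
\[
\Pr\Bigl(\max_{1 \leq \lambda \leq x} M_{\lambda, T_\lambda} > (4/\sqrt{\beta} + \delta)\log x\Bigr) \leq x \cdot x^{-1 - \delta_1/2} = x^{-\delta_1/2} \longrightarrow 0
\]
as $x \to \infty$, which is the required statement. The one point to verify is that the Gaussian branch of the tail bound is indeed the operative one in this regime: since $C = \Theta(\log x)$ and $T_\lambda = \Theta(\log x)$, we have $C \ll T_\lambda^{3/2}$, so the correction factor $1 - C^2 R/(2(T_\lambda + R)^3)$ stays close to $1$. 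There is no substantial obstacle beyond this routine bookkeeping of constants; the factor of $x$ from the union bound is comfortably absorbed by the strictly better-than-$\log x$ exponent produced by the quadratic dependence on $\delta$.
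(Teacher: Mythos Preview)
Your argument has a genuine gap: the claim that ``both $C$ and $T_\lambda$ are $\Theta(\log x)$'' is false for small $\lambda$. While $C = (4/\sqrt\beta + \delta)\log x$ is indeed $\Theta(\log x)$, the time $T_\lambda = (4/\beta)\log\lambda$ ranges over all values from a constant (at $\lambda=2$) up to $(4/\beta)\log x$ (at $\lambda=x$). For any fixed $\lambda$, or more generally for $\lambda \leq \exp\bigl((\log x)^{2/3}\bigr)$, the correction factor
\[
1 - \frac{C^2 R}{2(T_\lambda + R)^3}
\]
is not $1 - o(1)$; in fact it becomes large and negative, so the Gaussian branch of Proposition~\ref{prop:mgle} gives an exponent that is positive and the bound is vacuous. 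Your subsequent use of $T_\lambda \leq T_x$ only helps the leading factor $C^2/(4(T_\lambda+R))$, not this correction.

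The paper handles this by splitting the range of $\lambda$. For $\lambda > \exp\bigl((\log x)^{3/4}\bigr)$ one has $T_\lambda \gtrsim (\log x)^{3/4}$, so $C^2/(T_\lambda+R)^3 = O((\log x)^{-1/4}) = o(1)$ and the Gaussian branch works exactly as you describe. For the remaining $\lambda \leq \exp\bigl((\log x)^{3/4}\bigr)$, the paper uses the other branch $-C^{4/3}/(4T_\lambda^{1/3})$ of Proposition~\ref{prop:mgle}, which gives an exponent of order $-(\log x)^{4/3}/(\log x)^{1/4} = -(\log x)^{13/12}$, more than enough to beat the union bound over $x$ terms. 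Your proof is easily repaired by inserting this split; as written, it does not cover the small-$\lambda$ regime.
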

\begin{proof}
  As commented, it suffices to bound the probability for natural numbers $\lambda$ and $x.$
  By Proposition~\ref{prop:mgle} for any $\delta>0$ sufficiently small
  there is an $\epsilon > 0$ 
  and an $x_0$ sufficiently large so that for all $x > x_0$
%  For any such $x$ and any $\delta > 0,$ there is a $\lambda_0$ sufficiently large and an $\epsilon >0$ so that for
  and all $x > \lambda > \exp( (\log x)^{3/4})$ 
  \[
    \Pr\left(
    M_{\lambda,T_\lambda} >  \left(\frac{4}{\sqrt \beta} + \delta\right){\log x}  
    \right)
    \leq \exp\left(-(\log x)^2\tfrac{\left(\frac{4}{\sqrt \beta} + 2\delta\right)^2}{\tfrac{16}{\beta}\log \lambda}\right)
    \leq \exp( -(\log x)(1+\epsilon)).
  \]
  For smaller $\lambda,$ we have, taking the $4/3$--power bound in Proposition~\ref{prop:mgle}, that for some $C_{\beta,\delta}$
  \[
    \Pr\left(
    M_{\lambda,T_\lambda} >  \left(\frac{4}{\sqrt \beta} + \delta\right){\log x}  
    \right)
    \leq \exp\left(-(\log x)^{13/12}C_{\beta,\delta}
    \right)
  \]
  Hence, taking a union bound over all natural numbers $\lambda$ less than $x$ gives the desired bound.
\end{proof}
\begin{remark}
  In fact, the proof is easily modified to give 
  \[
    \limsup_{\lambda \to \infty}
    \left(
    \frac{M_{\lambda,T_\lambda} }
    { \log \lambda}
    \right)
    \leq \frac{4}{\sqrt{\beta}}, \quad \text{a.s.}
  \]
\end{remark}

\subsection*{The tube event and the lower bound}

Let $x$ be a natural number,
and let $R$ be a large parameter to be chosen later.
Let $T_\lambda' = T_\lambda - R^2\sqrt{\log\lambda}.$
Define an event $\mathcal{A}_\lambda$ given by 
\[
  \mathcal{A}_\lambda
  =\left\{ 
    \begin{aligned}
      &|M_{\lambda,t} - \sqrt{\beta}t| \leq R \sqrt{\log x}, ~\forall~0 \leq t \leq T_x'; \\ 
    &|[M_{\lambda}]_t - 2t| \leq R, ~\forall~0 \leq t \leq T_x'  
  \end{aligned}
  \right\}.
\]

Let $x$ be a natural number, and define 
\begin{equation}
  S_x = \sum_{\lambda=x}^{2x} \mathcal{E}( \sqrt{\beta} M_{\lambda,T_x'}) \one[ \mathcal{A}_\lambda]
  \label{eq:Sx}
\end{equation}
Notice that with this definition of $S_x$ we will have that $S_x>0$ if and only if the event $\mathcal{A}_\lambda$ occurs for some integer $\lambda \le x$. Using the Cauchy-Schwarz inequality for non-negative random variables, we arrive at the Paley-Zygmund inequality
\begin{equation}
\label{eq:PZinequality}
  \Pr( S_x > 0)\Exp S_x^2 \geq (\Exp S_x)^2.
\end{equation}
We wish to show that this has probability going to $1$ as $\lambda \to \infty$ for any $\delta > 0.$
Hence, we need to produce a lower bound of the form 
\[
  \Exp[\mathcal{E}( \sqrt{\beta} M_{\lambda,T_x'}) \one[ \mathcal{A}_\lambda]]
  =Q_{\sqrt{\beta},\lambda}(\mathcal{A}_\lambda) \ge  1 - C_\beta e^{-R^{4/3}/C_\beta}
  \label{eq:onepoint},
\]
and we need to produce a similar upper bound on
\[
  \Exp[
    \mathcal{E}( \sqrt{\beta} M_{\lambda_1,T_x'}) \one[ \mathcal{A}_{\lambda_1}]
    \mathcal{E}( \sqrt{\beta} M_{\lambda_2,T_x'}) \one[ \mathcal{A}_{\lambda_2}]
  ].
\]
From these bounds we will be able to show that as $x\to \infty$
\begin{equation}
  (\Exp S_x^2)/x^2 \to 0 \qquad \text{ and } \qquad \Exp S_x \geq x(1 - C_\beta e^{-R^{4/3}/C_\beta}).
  \label{eq:Sx2o1}
\end{equation}
Hence, we conclude \eqref{eq:PZinequality} that for any $\epsilon >0$ there is an $R$ sufficiently large and an $x_0$ sufficiently large so that for all $x > x_0$ 
\[
  \Pr( S_x > 0) \geq \frac{ (\Exp S_x)^2}{ \Exp S_x^2} \geq 1-\epsilon.
\]
We have therefore shown that by letting $R_x$ tend arbitrarily slowly to infinity
\begin{equation}
  \label{eq:2nd}
  \max_{x \leq \lambda \leq 2x} \left\{
  %  \alpha_{\lambda,T_x'} - 2\lambda(1-\tfrac4\beta \drift(T_x'))
    M_{\lambda,T_x'}
  \right\}
  \geq \sqrt{\beta}T_x' - R_x\sqrt{\log x},
\end{equation}
with probability going to $1$ as $x \to \infty.$

\subsection*{One point lower bound}
We need to find a lower bound on  
\[
  \Exp[\mathcal{E}( \sqrt{\beta} M_{\lambda,T_x'}) \one[ \mathcal{A}_\lambda]]
  =Q_{\sqrt{\beta},\lambda}(\mathcal{A}_\lambda),
\]
which is on the order of unity.  Recall that under $Q_{\sqrt{\beta},\lambda}$ the process $\alpha_{\lambda,\cdot} - \alpha_{-\lambda,\cdot}$ follows the accelerated stochastic sine equation \eqref{eq:usde} with $\xi=\sqrt{\beta}.$  The process $M_{\lambda,t}$ referenced in the event $\mathcal{A}_\lambda$ can be expressed as 
\[
  M_{\lambda,t} = u_{\lambda,\xi,t} - 2\lambda(1- \tfrac4\beta\drift(t)).
\]
Meanwhile, the performing the Doob decomposition on $u_{\lambda,\xi,t},$ we have
\[
  M_{\lambda,\xi,t} = u_{\lambda,\xi,t} 
  - 2\lambda(1- \tfrac4\beta\drift(t))
  - \int_0^t 2\xi\sin\left( \tfrac{ u_{\lambda,\xi,s}}{2}\right)^2ds
\]
The bracket process $[M_{\lambda,\xi}]_t$ is given as before by
\[
  [M_{\lambda,\xi}]_t
  = 
  \int_0^t 4\sin\left( \tfrac{u_{\lambda,\xi,s}}{2}\right)^2\,ds
  = 
  2t - 
  \int_0^t 2\cos\left( {u_{\lambda,\xi,s}}\right)\,ds.
\]
Hence we can write
\[
  \begin{aligned}
  Q_{\xi,\lambda}(\mathcal{A}_\lambda)
  \geq
  1
  -
  &Q_{\xi,\lambda}
  \left( 
  \sup_{0 \leq t \leq T_x'} \left|M_{\lambda,\xi,t} + \int_0^t \xi \cos(u_{\lambda,\xi,s})\,ds\right| > R\sqrt{\log x}
  \right) 
  \\
  -&Q_{\xi,\lambda}
  \left( 
  \sup_{0 \leq t \leq T_x'} 
  \left|\int_0^t 2\cos\left( {u_{\lambda,\xi,s}}\right)\,ds\right| > R
  \right). 
\end{aligned}
\]
By Propositions \ref{prop:oscbnd} and \ref{prop:mgle}, we conclude that
\begin{equation}
  Q_{\xi,\lambda}(\mathcal{A}_\lambda)
  \geq 1 - C_\beta e^{-R^{4/3}/C_\beta}
  \label{eq:onepoint}
\end{equation}
for some $C_\beta$ sufficiently large and all $\lambda$ sufficiently large.

\subsection*{Two point bound}

Following the heuristic \eqref{eq:Mheuristic}, we treat $M_{\lambda_1,t}$ and $M_{\lambda_2,t}$ as uncorrelated until $T_* = \tfrac4\beta \log_+ | \lambda_1 - \lambda_2 |.$ 
Without loss of generality, suppose that $\lambda_2 \geq \lambda_1.$
On the event $\mathcal{A}_{\lambda_2},$ we can estimate 
\[
  \begin{aligned}
    \mathcal{E}( \sqrt{\beta} M_{\lambda_2,T_x'})
    &
    =
    \mathcal{E}( \sqrt{\beta} M_{\lambda_2,T_*})
    \exp\left(
    \sqrt{\beta} ( M_{\lambda_2,T_x'} - M_{\lambda_2,T_*})
    -\tfrac{\beta}{2} ([M_{\lambda_2}]_{T_x'} - [M_{\lambda_2}]_{T_*})
    \right) \\
    &
    \leq
    \mathcal{E}( \sqrt{\beta} M_{\lambda_2,T_*})
    \exp\left(
    2\sqrt{\beta} R \sqrt{\log x} + {\beta} R
    \right).
  \end{aligned}
\]
Hence, we have the estimate
\begin{equation}
  \begin{aligned}
  &\Exp[
    \mathcal{E}( \sqrt{\beta} M_{\lambda_1,T_x'}) \one[ \mathcal{A}_{\lambda_1}]
    \mathcal{E}( \sqrt{\beta} M_{\lambda_2,T_x'}) \one[ \mathcal{A}_{\lambda_2}]
  ] \\
  \leq
  &\Exp\left[
    \mathcal{E}( \sqrt{\beta} M_{\lambda_1,T_x'}) 
    \mathcal{E}( \sqrt{\beta} M_{\lambda_2,T_*})
    \exp\left(
    2\sqrt{\beta} R \sqrt{\log x} + {\beta} R
    \right)
  \right]. 
\end{aligned}
  \label{eq:decorrelationpoint}
\end{equation}

We now observe that
\begin{equation}
    \mathcal{E}( \sqrt{\beta} M_{\lambda_1,T_x'}) 
    \mathcal{E}( \sqrt{\beta} M_{\lambda_2,T_*})
    =
    \mathcal{E}( \sqrt{\beta} (M_{\lambda_1,T_x'} + M_{\lambda_2,T_*}))
    \exp\left( 
    \beta
    [M_{\lambda_1},M_{\lambda_2}]_{T_* \wedge T_x'}
    \right).
    \label{eq:cross}
  \end{equation}
By the Girsanov theorem, under the measure $\mathbb{S}$ with Radon--Nikodym derivative
\[
  \frac{d\mathbb{S}}{d\Pr} = \mathcal{E}( \sqrt{\beta} (M_{\lambda_1,T_x'} + M_{\lambda_2,T_*})),
\]
we have that there is a finite variation process $A_t$ bounded almost surely by an absolute constant so that
\[
  dU_t = dZ_t - \sqrt{\beta}A_t\,dt
\]
is a standard complex $\mathbb{S}$--Brownian motion. Here $Z_t$ is the standard complex Brownian motion used in equation (\ref{eq:sde}) under the measure $\mathbb{P}$. Meanwhile \eqref{eq:sde} (also c.f.\ \eqref{eq:bracket}) shows that  
\(
[M_{\lambda_1},M_{\lambda_2}]_t
\)
is a sum of integrals of $e^{i\sigma_1(\sigma_1\alpha_{\sigma_2 \lambda_1,t} + \sigma_3 \alpha_{\sigma_4 \lambda_2,t})}$ with $\sigma_j \in \left\{ 1,-1 \right\}.$ 
Applying Proposition~\ref{prop:oscbnd} to each of these integrals, we can conclude 
\[
  \Pr\left(
  [M_{\lambda_1},M_{\lambda_2}]_{T_* \wedge T_x'} > t + C
  \right)
  \leq e^{-t^2/C}
\]
for sufficiently large $C.$  Hence we conclude using \eqref{eq:cross} and \eqref{eq:decorrelationpoint} that there is some constant $C_\beta$ so that for any $R>0$
\begin{equation}
  \label{eq:crude}
  \Exp[
    \mathcal{E}( \sqrt{\beta} M_{\lambda_1,T_x'}) \one[ \mathcal{A}_{\lambda_1}]
    \mathcal{E}( \sqrt{\beta} M_{\lambda_2,T_x'}) \one[ \mathcal{A}_{\lambda_2}]
  ]
  \leq
  e^{C_\beta + 2R\sqrt{\beta \log x} + \beta R}.
\end{equation}

\subsection*{Fine estimate}

We also need an estimate that improves when $\lambda_1$ and $\lambda_2$ are well separated.  Once more, we estimate by dropping the indicators and writing
\begin{equation}
  \label{eq:finestart}
  \Exp[
    \mathcal{E}( \sqrt{\beta} M_{\lambda_1,T_x'}) \one[ \mathcal{A}_{\lambda_1}]
    \mathcal{E}( \sqrt{\beta} M_{\lambda_2,T_x'}) \one[ \mathcal{A}_{\lambda_2}]
  ]
  \leq
  \mathbb{S}
  \left( 
  \exp\left( 
    \beta
    [M_{\lambda_1},M_{\lambda_2}]_{T_x'}
    \right)
  \right),
\end{equation}
where 
\[
  \frac{d\mathbb{S}}{d\Pr} = \mathcal{E}( \sqrt{\beta} (M_{\lambda_1,T_x'} + M_{\lambda_2,T_x'})).
\]
Now, on applying Proposition~\ref{prop:oscbnd}, we have a tail bound of the form
\[
  \Pr\left(
  [M_{\lambda_1},M_{\lambda_2}]_{T_x'} > t + C_\beta/\Delta
  \right)
  \leq e^{-t^2\Delta^2/C_\beta}
\]
where $\Delta = |\lambda_1-\lambda_2|\drift(T_x')$ and $C_\beta > 0$ is a constant.  This leads to an estimate of the form
\begin{equation}
  \label{eq:fine}
  \Exp[
    \mathcal{E}( \sqrt{\beta} M_{\lambda_1,T_x'}) \one[ \mathcal{A}_{\lambda_1}]
    \mathcal{E}( \sqrt{\beta} M_{\lambda_2,T_x'}) \one[ \mathcal{A}_{\lambda_2}]
  ]
  \leq
  \exp\left( 
  C_\beta/\Delta
  \right).
\end{equation}
for some other $C_\beta$ and all $\Delta \geq 1.$ 

\subsection*{The second moment}

Here we estimate $\Exp S_x^2.$  
Recalling \eqref{eq:Sx}, we can write
\begin{equation}
  \Exp S_x^2 = 
  \sum_{\lambda_1=x}^{2x} 
  \sum_{\lambda_2=x}^{2x} 
  \Exp\left[
    \mathcal{E}( \sqrt{\beta} M_{\lambda_1,T_{\lambda_1}}) \one[ \mathcal{A}_{\lambda_1}]
      \mathcal{E}( \sqrt{\beta} M_{\lambda_2,T_{\lambda_2}}) \one[ \mathcal{A}_{\lambda_2}]
  \right].
  \label{eq:Sx2}
\end{equation}
We partition this sum according to the magnitude of $|\lambda_1-\lambda_2|.$  Let $S_0$ be all those pairs $(\lambda_1,\lambda_2)$ so that $|\lambda_1-\lambda_2| \geq x e^{- \tfrac12 R^2 \sqrt{\log x}}.$  Let $S_1$ be the remaining pairs.  Observe that the cardinality of $S_1$ is at most $2x^2e^{- \tfrac12 R^2 \sqrt{\log x}}.$

%For $j \in \N$ define $S_j$ as those pairs so that  
%\[
%  x e^{-\tfrac12 R^2 \sqrt{\log x}} 2^{-j+1}
%  > |\lambda_1-\lambda_2|
%  \geq
%  x e^{-\tfrac12 R^2 \sqrt{\log x}} 2^{-j}.
%\]

For terms in $S_0,$ we apply the fine bound \eqref{eq:fine}.  The term $\Delta$ that appears for such terms can be estimated uniformly by
\[
  \Delta \geq xe^{-\tfrac12 R^2 \sqrt{\log x}} \cdot \tfrac{\beta}{4} e^{-\log x + R^2 \sqrt{\log x}},
\]
which tends to $\infty$ with $x.$  In particular, we can estimate 
\begin{equation}
  \sum_{S_0} 
  \Exp\left[
    \mathcal{E}( \sqrt{\beta} M_{\lambda_1,T_x'}) \one[ \mathcal{A}_{\lambda_1}]
      \mathcal{E}( \sqrt{\beta} M_{\lambda_2,T_x'}) \one[ \mathcal{A}_{\lambda_2}]
  \right]
  \leq x^2\cdot (1+ O(e^{-\tfrac12 R^2\sqrt{\log x}})).
  \label{eq:Sx20}
\end{equation}

For the remaining terms, we apply the coarse bound \eqref{eq:crude}, using which we conclude that
\begin{equation}
  \sum_{S_1} 
  \Exp\left[
    \mathcal{E}( \sqrt{\beta} M_{\lambda_1,T_x'}) \one[ \mathcal{A}_{\lambda_1}]
      \mathcal{E}( \sqrt{\beta} M_{\lambda_2,T_x'}) \one[ \mathcal{A}_{\lambda_2}]
  \right]
  \leq x^2e^{C_\beta-\tfrac12 R^2\sqrt{\log x} + 2R\sqrt{\beta\log x}+\beta R}.
  \label{eq:Sx2j}
\end{equation}
Hence picking $R$ sufficiently large (anything larger than $4\sqrt{\beta}$ will do), we have combining \eqref{eq:Sx2}, \eqref{eq:Sx20} and \eqref{eq:Sx2j} that
\begin{equation}
  (\Exp S_x^2)/x^2 \to 0
  \label{eq:Sx2o1}
\end{equation}
as $x \to \infty.$

\subsection*{Proof of main theorem}

As in the proofs of Propositions~\ref{prop:tail} and \ref{prop:timeout}, we have that
\(
  \left( \alpha_{\lambda,t} - \alpha_{\lambda,T_x'} - 4\pi : t \geq T_x', \lambda > 0 \right)
\)
is stochastically dominated by 
\(
  \left( \alpha_{\lambda \tfrac 4 \beta \drift(T_x'),t} : t \geq 0, \lambda > 0 \right).
\)
Therefore we have by Proposition~\ref{prop:mgle} that there is a $\gamma > 0$ so that for all $C>0,$
\[
  \max_{x \leq \lambda \leq 2x}
  \Pr\left( 
  \alpha_{\lambda,T_x} - \alpha_{\lambda,T_x'} - 2\lambda( \tfrac 4 \beta )( \drift(T_x) - \drift(T_x')) \leq -C + 4\pi
  \right) \leq e^{-\gamma C^2/(T_x - T_x')}.
\]
In particular we conclude that 
\begin{equation}
  \label{eq:3rd}
  \max_{x \leq \lambda \leq 2x}
  \left\{
    -
    M_{\lambda,T_x}
    +
    M_{\lambda,T_x'}
  \right\}
  \leq C_\beta R_x (\log x)^{3/4}
\end{equation}
with probability going to $1.$

Finally, we observe that for $0 \leq \lambda \leq 2x,$
\[
  0
  \leq
  \alpha_{\lambda,\infty}
  -
  \alpha_{\lambda,T_x}
  =
  M_{\lambda,\infty}
  -
  M_{\lambda,T_x}
  +2\lambda(\tfrac 4\beta \drift(T_x))
  \leq
  M_{\lambda,\infty}
  -
  M_{\lambda,T_x}
  +\tfrac{16}{\beta}.
\]
Therefore, we conclude that 
\begin{equation}
  \max_{x \leq \lambda \leq 2x}
  \left\{ 
    M_{\lambda,\infty}
  \right\}
  \geq
  \max_{x \leq \lambda \leq 2x}
  \left\{ 
    M_{\lambda,T_x}
  \right\}
  -\frac{16}{\beta}
  \label{eq:4th}
\end{equation}
Combining \eqref{eq:2nd}, \eqref{eq:3rd} and \eqref{eq:4th}, we conclude that
\[
  \max_{x \leq \lambda \leq 2x}
  \left\{ 
    M_{\lambda,\infty}
  \right\}
  \geq \frac{4}{\sqrt{\beta}}\log(x) - C_\beta R_x (\log x)^{3/4} - (R_x^2+R_x)\sqrt{\log x} - \frac{16}{\beta}
\]
with probability going to $1$ as $x \to \infty.$ 

\bibliographystyle{abbrv}
\bibliography{sinebetamax}

\end{document}